\newcommand{\noun}[1]{\textsc{#1}}
\providecommand{\tabularnewline}{\\}
\theoremstyle{plain}
\newtheorem{thm}{Theorem}
  \theoremstyle{plain}
  \newtheorem{lem}{Lemma}
\begin{document}

\title{A Löwner variational method in the theory of schlicht functions}

\author{Eberhard Michel}

\maketitle
\noindent \begin{center}
Sonnenblumenweg 5
\par\end{center}

\noindent \begin{center}
65201 Wiesbaden
\par\end{center}
\begin{abstract}
A direct variational method is developed that allows to calculate
arbitrary continuous coefficient functionals of the second, third
and fourth coefficients of schlicht functions. Based on this method
an improved lower bound for the Milin-constant(0.034856..) is given,
as well as an improved lower bound for the maximal modulus of the
seventh coefficient of odd schlicht functions(1.006763..). 
\end{abstract}
{\small AMS Subject Classification}\noun{\small : }{\small 30C50 Coefficient
problems of univalent and multivalent functions}\\
{\small }\\
{\small Keywords: maximizing sequence, Banach-Alaoglu theorem,
Milin-constant, seventh coefficient of odd schlicht functions, Löwner
differential equation}\\

First set up some notation, denote the unit disk by $\mathbb{D}$,
the set of schlicht functions by S and for an arbitrary $n\in\mathcal{\mathbb{N}}$
let $\mathbb{V}_{n}$ denote the n-th coefficient region of the schlicht
functions. Consider a continuous coefficient functional $\Phi:\mathbb{V}_{n}\mathbb{\rightarrow C}$.
For every point $a\in\mathbb{V}_{n}$ there exists a function $f\in$
S so that $a=(a_{\text{\emph{2}}}(f),..,a_{\text{\emph{n}}}(f))$
where $a_{\text{\emph{k}}}(f)\; k=2,..,n$ denotes the k-th coefficient
of $f\in$ S. Then since S and $\mathbb{V}_{n}$ are compact(see \cite{1},
chapter 11)\begin{equation}
\mathit{max}\{\mathit{Re}\Phi(a)\,|\, a\in\mathbb{V}_{n}\}=\underset{f\in S}{\mathit{max}}\, Re\Phi(f)\end{equation}
exists. For the sake of simplicity the notations in (1) will be used
synonymously. The following results about the Löwner differential
equation can be found for instance in \cite{1} or \cite{2}. For
an arbitrary schlicht function $f\in$ S which is a single slit mapping
let $f(z)=z+\sum_{n=2}^{\infty}a_{n}z^{n}$. Then $f$ can be embedded
as initial point into a Löwner chain $f(z,t)=e^{\text{\emph{t}}}z+\sum_{n=2}^{\infty}a_{n}(t)z^{n}$,
i.e. $f(z,0)=f(z)$, $z\in D$ which satisfies the differential equation\begin{equation}
\dot{f}(z,t)=\frac{1+\kappa(t)z}{1-\kappa(t)z}zf'(z,t)\end{equation}
The Löwner differential equation (2) for the schlicht function $f:\mathbb{D}\rightarrow\mathbb{C}$
yields the differential equation\begin{equation}
\dot{a}_{\text{\emph{n}}}(t)=na_{n}(t)+\sum_{k=1}^{n-1}2ka_{k}(t)\kappa(t)^{\text{\emph{n-k}}}\end{equation}
or in integrated form\begin{equation}
a_{n}(t)=-e^{nt}\int_{\text{\emph{t}}}^{\infty}e^{\text{\emph{-ns}}}\sum_{k=1}^{n-1}2ka_{k}(s)\kappa(s)^{\text{\emph{n-k}}}ds\end{equation}
for the coefficients $a_{\text{\emph{n}}}(t)$ , $a_{n}(0)=a_{n}$
if $n\geq2$. For a single slit mapping $\kappa:[0,\infty)\rightarrow\mathbb{C}$
is a continuous function with $|\kappa(t)|=1.$ Substituting $x=e^{\text{\emph{-t}}}$
and putting $\alpha_{n}(x)=a_{n}(t)$ yields the equivalent differential
equation\begin{equation}
-x\alpha_{\text{\emph{n}}}'(x)=n\alpha_{\text{\emph{n}}}(x)+\sum_{k=1}^{n-1}2k\alpha_{k}(x)\mu(x)^{\text{\emph{n-k}}}\end{equation}
for $n\geq2$ where $\mu:(0,1]\rightarrow\mathbb{C}$ is a continuous
function that satisfies $\mu(x)=\kappa(t).$ Let $g_{\text{\emph{n}}}(x)=x^{\text{\emph{n}}}\alpha_{\text{\emph{n}}}(x)$
and $g_{1}(x)\equiv1$ then (5) takes the form\begin{equation}
g_{\text{\emph{n}}}'(x)=-\sum_{k=1}^{n-1}2kx^{\text{\emph{n-k-1}}}g_{k}(x)\mu(x)^{\text{\emph{n-k}}}\end{equation}
or in integrated form\begin{equation}
g_{\text{\emph{n}}}(x)=-\int_{\emph{0}}^{\text{\emph{x}}}\sum_{k=1}^{n-1}2kt^{\text{\emph{n-k-1}}}g_{k}(t)\mu(t)^{\text{\emph{n-k}}}dt\end{equation}
if $n\geq2$. Notice that $g_{\text{\emph{n}}}(1)=a_{\text{\emph{n}}}$
and $g_{\text{\emph{n}}}(0)=0$ if $n\geq2$. These representations
hold in particular if $\mu$ is continuous or a step function with
a finite number of steps. In the sequel the function $\mu:(0,1]\rightarrow\mathbb{C}$(and
$\kappa$ synonymously) will be called a generating function of the
schlicht function $f:\mathbb{D}\rightarrow\mathbb{C}$, since because
of (7) and the identity theorem it uniquely determines the coefficients
of the schlicht function $f$. No assumption whether the function
$f\in$ S uniquely determines the generating function is needed in
the sequel. \\
Some further preparations are necessary in order to formulate
Theorem 1, in particular a special class of step functions needs to
be introduced. For every $m\in\mathbb{N}$ consider the equidistant
partition of the interval $[0,1]$ into $m$ subintervals $I_{k}$,
$k=1,..,m$ and define $s:[0,1]\mathbb{\rightarrow C}$ by $s(x)=c_{\text{\emph{k}}}$
if $x\in I_{k}$ where $c_{\text{\emph{k}}}\in\mathbb{C}$ and $|c_{\text{\emph{k}}}|=1$
for $k=1,..,m$. Let $I_{k}$ be defined by $I_{k}=[(k-1)/m,k/m)$
for $k=1,..,m-1$ and $I_{m}=[(m-1)/m,1]$. Denote the set of these
step functions by $T_{\text{\emph{m}}}([0,1])$. Every such step function
$s\in T_{\text{\emph{m}}}([0,1])$, $m\in\mathbb{N}$ generates a
Löwner chain because the function $p(w,t)=(1+s(x(t))w)/(1-s(x(t))w)$
where $x(t)=e^{-t}$ has positive real part for every $t\in[0,\infty)$(\cite{1},Theorem
3.4). As a consequence every $s\in T_{\text{\emph{m}}}([0,1])$ is
a generating function of a schlicht function $f_{\text{\emph{s}}}$
which can be embedded as the initial point $f(z,0)=f_{s}(z)$ into
a Löwner chain $f(.,t)$ that satisfies the differential equation
(2). $f_{\text{\emph{s}}}$ in turn generates a point $(a_{\text{\emph{2}}}(f_{\text{\emph{s}}}),..,a_{\text{\emph{n}}}(f_{\text{\emph{s}}}))\in\mathbb{V}_{n}$
through formula (7). Let $\mathbb{V}_{n}^{m}$ denote the subset of
$\mathbb{V}_{n}$ that is generated by step functions $s\in T_{\text{\emph{m}}}([0,1])$
in this way. \\
A property is said to hold almost everywhere(a.e.) on a set $X$
if it holds everywhere on $X$ except for a subset of measure zero.
The gaussian function $[x],\; x\in\mathbb{R}$ will also be used,
it denotes the largest integer $\leq x$.\\
The basic idea of the method is to consider step functions with
an equidistant partition of $m$ subintervals as generating functions
and to calculate the maximum of the functional within this subclass.
Thus a sequence of values for the functional is obtained and it has
to be shown that this sequence converges to the maximum of the functional
within the whole class S. This is done in the proof of Theorem 1 and
the method can therefore be described as a direct variational method
of calculating maximizing sequences. 
\begin{thm}
For $m,n\in\mathbb{N}$ let $\mathbb{V}_{n},\,\,\mathbb{V}_{n}^{m}$
and $T_{m}((0,1])$ be defined as above and let $\Phi:\mathbb{V}_{n}\mathbb{\rightarrow C}$
denote a continuous functional. Suppose that for every $m\in\mathbb{N}$
there exists a function $f_{m}\in$ S such that \begin{equation}
\mathit{sup}\{Re\Phi(a)\,|\, a\in\mathbb{V}_{n}^{m}\}=Re\Phi(a_{2}(f_{m}),..,a_{n}(f_{m}))\;\mathit{where}\;(a_{2}(f_{m}),..,a_{n}(f_{m}))\in\mathbb{V}_{n}^{m}.\end{equation}
Then \begin{equation}
\underset{m\rightarrow\infty}{\mathit{lim}}Re\Phi(f_{m})=\underset{f\in S}{\mathit{max}}\, Re\Phi(f).\end{equation}
Furthermore if $s_{m}\in T_{m}([0,1])$ denotes a generating function
of $f_{m}$, $m\in\mathbb{N}$ then there exists a subsequence $(s_{m(k)})_{k\in\mathbb{N}}$
of $(s_{m})$ and a function $\tilde{\mu}\in L^{\infty}([0,1])$ so
that $s_{m(k)}\rightarrow\tilde{\mu}$ weak-{*} as $k\rightarrow\infty$,
the sequence $(f_{m(k)})_{k\in\mathbb{N}}$ converges locally uniformly
on the unit disk to a function $\tilde{f}\in$ S that maximizes $Re\Phi$
and $\tilde{\mu}$ is a generating function of $\tilde{f}$ . \end{thm}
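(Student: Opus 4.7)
\emph{Compactness extractions.} I would first simultaneously extract convergent subsequences in two independent topologies. The class $S$ is a normal family by the Koebe growth bound and is closed under locally uniform limits by Hurwitz; hence some subsequence $f_{m(k)}\to \tilde f\in S$ locally uniformly on $\mathbb{D}$. This transfers to convergence of every Taylor coefficient, $a_{j}(f_{m(k)})\to a_{j}(\tilde f)$, and continuity of $\Phi$ gives $\mathrm{Re}\,\Phi(f_{m(k)})\to \mathrm{Re}\,\Phi(\tilde f)$. Since $\|s_{m}\|_{L^{\infty}}=1$ and $L^{1}([0,1])$ is separable, the sequential Banach--Alaoglu theorem lets me refine once more so that $s_{m(k)}\to \tilde\mu$ weak-{*} in $L^{\infty}([0,1])$ with $\|\tilde\mu\|_{\infty}\leq 1$.

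\emph{Density and maximization.} The aim is to prove that $\bigcup_{m}\mathbb{V}_{n}^{m}$ is dense in $\mathbb{V}_{n}$, which will force $\tilde f$ to be a maximizer. Fix any single-slit mapping with continuous driver $\mu:(0,1]\to\partial\mathbb{D}$, and let $\sigma_{m}\in T_{m}([0,1])$ be its midpoint approximant on the equidistant partition. Then $\sigma_{m}\to \mu$ uniformly, and uniform convergence is enough to pass to the limit in the recursion (7) inductively in $j$, yielding $a_{j}(f_{\sigma_{m}})\to a_{j}(f_{\mu})$ for every $j\leq n$. Since single-slit mappings are dense in $S$, this establishes the density claim. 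Continuity of $\Phi$ and the nesting $\mathbb{V}_{n}^{m_{0}}\subset \mathbb{V}_{n}^{km_{0}}$ then give $\sup_{\mathbb{V}_{n}^{m}}\mathrm{Re}\,\Phi \to M:=\max_{\mathbb{V}_{n}}\mathrm{Re}\,\Phi$ as $m\to\infty$ along multiples of any fixed $m_{0}$; combined with the trivial upper bound $\mathrm{Re}\,\Phi(f_{m})\leq M$, this yields (9) and $\mathrm{Re}\,\Phi(\tilde f)=M$.

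\emph{Identifying $\tilde\mu$ as a generating function of $\tilde f$.} This is the crucial and most delicate step: I must verify $g_{n}^{\tilde\mu}(1)=a_{n}(\tilde f)$ for every $n\geq 2$, with $g_{n}^{\tilde\mu}$ defined by (7). I would argue by induction on $n$. The base case $n=2$ is immediate because the integrand in (7) is linear in $\mu$: testing the weak-{*} convergence $s_{m(k)}\to\tilde\mu$ against $\chi_{[0,1]}\in L^{1}([0,1])$ produces $a_{2}(f_{m(k)})=g_{2}^{s_{m(k)}}(1)\to g_{2}^{\tilde\mu}(1)$, matching $a_{2}(\tilde f)$. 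The main obstacle is the inductive step for $n\geq 3$: the integrand contains the power $\mu^{n-k}$, and weak-{*} convergence of $s_{m(k)}$ does not by itself imply weak-{*} convergence of $s_{m(k)}^{n-k}$ to $\tilde\mu^{n-k}$ in any topology strong enough to justify passage to the limit in the integral. To close this gap I would exploit the unit-modulus constraint $|s_{m(k)}|\equiv 1$ together with the extremality of $\tilde f$: a Schiffer-type boundary variation shows that the extremal $\tilde f$ is a slit mapping whose driver satisfies $|\tilde\mu|=1$ a.e., whereupon the rigidity identity
\[
\int_{0}^{1}|s_{m(k)}-\tilde\mu|^{2}\,dx = 2 - 2\,\mathrm{Re}\!\int_{0}^{1}s_{m(k)}\overline{\tilde\mu}\,dx \longrightarrow 0
\]
upgrades weak-{*} to strong $L^{2}$ convergence, hence to a.e.\ convergence along a further subsequence. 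With the powers $s_{m(k)}^{n-k}$ and the inductively controlled $g_{k}^{s_{m(k)}}$ uniformly bounded and converging a.e., dominated convergence then lets me take the limit inside the integral in (7), completing the induction and identifying $\tilde\mu$ as a generating function of $\tilde f$.
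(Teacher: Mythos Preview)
Your route to (9) is essentially the paper's: approximate a continuous Loewner driver by step functions on the equidistant partition and pass to the limit in the recursion (7) inductively. The paper uses pointwise convergence plus dominated convergence where you invoke uniform convergence; either works. The nesting $\mathbb{V}_n^{m_0}\subset\mathbb{V}_n^{km_0}$ is a detour---as written it only yields convergence along multiples of a fixed $m_0$, whereas your own approximation argument already shows that for every single-slit $h$ and every $\varepsilon$ there is an $N$ such that $\mathbb{V}_n^m$ contains a point $\varepsilon$-close to the coefficients of $h$ for \emph{all} $m>N$, and that is what gives the full-sequence limit directly.

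The genuine problem is your third step. You are right that weak-{*} convergence of $s_{m(k)}$ does not pass to the powers $s_{m(k)}^{\,j}$; the paper simply writes the corresponding chain of equalities without justification, so you have correctly located a real subtlety. But your repair is circular. Schiffer variation---even when it applies, which needs more than mere continuity of $\Phi$---tells you only that the extremal $\tilde f$ admits \emph{some} unit-modulus driver $\nu$; it says nothing about the particular weak-{*} limit $\tilde\mu$. Your rigidity identity
\[
\int_0^1|s_{m(k)}-\tilde\mu|^2\,dx \;=\; 2-2\,\mathrm{Re}\!\int_0^1 s_{m(k)}\overline{\tilde\mu}\,dx \;\longrightarrow\; 0
\]
requires $\int_0^1|\tilde\mu|^2\,dx=1$, i.e.\ $|\tilde\mu|=1$ a.e., and that cannot be extracted from weak-{*} convergence alone: a sequence of unimodular step functions can have any weak-{*} limit in the closed unit ball of $L^\infty$ (for instance $\pm 1$ alternating on ever finer dyadic blocks converges weak-{*} to $0$). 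Thus the upgrade to strong $L^2$ convergence already presupposes that $\tilde\mu$ is a unit-modulus driver of $\tilde f$, which is exactly the conclusion you are trying to reach. Neither your argument nor the paper's, as written, establishes that the weak-{*} limit $\tilde\mu$ is itself a generating function of $\tilde f$.
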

\begin{proof}
Let $\varepsilon>0${\Large{} }be arbitrary and let $\tilde{h}\in$
S denote a function that maximizes $Re\Phi$. Since the single-slit
mappings lie dense in S there exists a single slit mapping $h\in$
S, $h(z)=z+\sum_{j=2}^{\infty}a_{\text{\emph{j}}}z^{\text{\emph{j}}}$
such that \begin{equation}
|Re\Phi(a_{\text{\emph{2}}},..,a_{\text{\emph{n}}})-Re\Phi(a_{\text{\emph{2}}}(\tilde{h}),..,a_{\text{\emph{n}}}(\tilde{h}))|\leq|\Phi(a_{\text{\emph{2}}},..,a_{\text{\emph{n}}})-\Phi(a_{\text{\emph{2}}}(\tilde{h}),..,a_{\text{\emph{n}}}(\tilde{h}))|<\varepsilon/2\end{equation}
Since $h$ is a single slit mapping there exists a continuous generating
function $\mu:(0,1]\rightarrow\mathbb{C}$ of $h$. For $k\in\mathbb{N}$
and $x\in(0,1]$ define a sequence of step functions $t_{\text{\emph{k}}}$
by $t{}_{\text{\emph{k}}}(x)=\mu(([kx]+1)/k)$. If $x\in(0,1)$ is
arbitrarily chosen, then since $[kx]/k\leq x<([kx]+1)/k$ it follows
that\[
0<\frac{[kx]+1}{k}-x=|x-\frac{[kx]+1}{k}|\leq|\frac{[kx]+1}{k}-\frac{[kx]}{k}|=\frac{1}{k}\]
Hence for every $\tilde{\varepsilon}>0$ there exists a $N(x,\tilde{\varepsilon})\in\mathcal{\mathbb{N}}$
so that \[
|\mu(x)-t_{k}(x)|=|\mu(x)-\mu(\frac{[kx]+1}{k})|<\tilde{\varepsilon}\]
if $k>N(x,\tilde{\varepsilon})$, i.e. \begin{equation}
\underset{k\rightarrow\infty}{\mathit{lim}}t_{k}(x)=\mu(x)\end{equation}
for $x\in(0,1)$. For every $k\in\mathbb{N}$ the step function $t{}_{k}$
generates a uniquely determined function $h_{k}\in$ S, suppose that
$h_{k}(z)=z+\sum_{j=2}^{\infty}a_{\text{\emph{j}}}^{\text{\emph{(k)}}}z^{\text{\emph{j}}}.$
By (7) there exist functions $g_{\text{\emph{m}}}^{\text{\emph{(k)}}}(x)$
and $g_{\text{\emph{m}}}(x)$ for $m\geq2$ that have the representations\begin{equation}
g_{\text{\emph{m}}}^{\text{\emph{(k)}}}(x)=-\int_{\emph{0}}^{\text{\emph{x}}}\sum_{j=1}^{m-1}2js{}^{\text{\emph{m-j-1}}}g_{j}^{\text{\emph{(k)}}}(s)t_{k}(s)^{\text{\emph{m-j}}}ds\end{equation}
and, since $\mu$ is a generating function of $h$ \begin{equation}
g_{\text{\emph{m}}}(x)=-\int_{\emph{0}}^{\text{\emph{x}}}\sum_{j=1}^{m-1}2js{}^{\text{\emph{m-j-1}}}g_{j}(s)\mu(s)^{\text{\emph{m-j}}}ds.\end{equation}
Then the relations $g_{\emph{m}}^{\text{\emph{(k)}}}(1)=a_{\text{\emph{m}}}^{\text{\emph{(k)}}}=a_{m}(h{}_{k})$
and $g_{\emph{m}}(1)=a_{\text{\emph{m}}}=a_{m}(h)$ hold for $m\in\mathbb{N}$.
Applying the Lebesgue dominated convergence theorem, the product rule
for limits and (11) to (12) by induction yields the relations \begin{equation}
\underset{k\rightarrow\infty}{\mathit{lim}}g_{\emph{m}}^{\text{\emph{(k)}}}(x)=g_{\text{\emph{m}}}(x)\end{equation}
for every $x\in(0,1]$ and every $m\geq2$. Since $\Phi$ is continuous
(14) with $x=1$ implies that there exists a $N(\varepsilon)\in\mathbb{N}$
so that for $m>N(\varepsilon)$ \begin{equation}
|\mathit{\mathit{Re}}\Phi(a_{\text{\emph{2}}}^{\text{\emph{(m)}}},..,a_{\text{\emph{n}}}^{\text{\emph{(m)}}})-\mathit{\mathit{Re}}\Phi(a_{\text{\emph{2}}},..,a_{\text{\emph{n}}})|\leq|\Phi(a_{\text{\emph{2}}}^{\text{\emph{(m)}}},..,a_{\text{\emph{n}}}^{\text{\emph{(m)}}})-\Phi(a_{\text{\emph{2}}},..,a_{\text{\emph{n}}})|<\varepsilon/2.\end{equation}
On the other hand since $(a_{\text{\emph{2}}}^{\text{\emph{(m)}}},..,a_{\text{\emph{n}}}^{\text{\emph{(m)}}})\in\mathbb{V}_{n}^{m}$
the supposition (8) implies that for every $m\in\mathbb{N}$ there
exists a function $f_{m}\in$ S such that \begin{equation}
\mathit{\mathit{Re}}\Phi(a_{\text{\emph{2}}}^{\text{\emph{(m)}}},..,a_{\text{\emph{n}}}^{\text{\emph{(m)}}})\leq\mathit{\mathit{Re}}\Phi(a_{\text{\emph{2}}}(f_{\text{\emph{m}}}),..,a_{\text{\emph{n}}}(f_{\text{\emph{m}}}))\end{equation}
and $(a_{\text{\emph{2}}}(f_{\text{\emph{m}}}),..,a_{\text{\emph{n}}}(f_{\text{\emph{m}}}))\in\mathbb{V}_{n}^{m}$.
But $\tilde{h}\in$ S was chosen as an extremal function that maximizes
$\mathit{Re}\Phi$ and hence (10),(15) and (16) imply that\begin{equation}
0\leq\mathit{\mathit{Re}}\Phi(\tilde{h})-\mathit{\mathit{Re}}\Phi(a_{\text{\emph{2}}}(f_{\text{\emph{m}}}),..,a_{\text{\emph{n}}}(f_{\text{\emph{m}}}))\leq\mathit{\mathit{Re}}\Phi(\tilde{h})-\mathit{\mathit{Re}}\Phi(a_{\text{\emph{2}}}^{\text{\emph{(m)}}},..,a_{\text{\emph{n}}}^{\text{\emph{(m)}}})<\varepsilon\end{equation}
for $m>N(\varepsilon)$. This proves (9). For an arbitrary $m\in\mathbb{N}$
since $(a_{2}(f_{m}),..,a_{n}(f_{m}))\in\mathbb{V}_{n}^{m}$ the schlicht
function $f_{m}$ possesses a generating step function $s_{m}\in T_{m}([0,1])$.
The sequence $(s_{\text{\emph{m}}})$ is uniformly bounded, $s_{m}\in L^{\infty}([0,1])$
and since $(L^{1}([0,1]))^{\text{*}}=L^{\infty}([0,1])$ the sequential
Banach-Alaoglu Theorem asserts that the sequence $(s_{m})$ possesses
a subsequence $(s_{\text{\emph{m(k)}}})$ that converges weak-{*}
to a function $\tilde{\mu}\in L^{\infty}([0,1])$ as $k\rightarrow\infty$.
Hence\begin{eqnarray*}
\underset{k\rightarrow\infty}{\mathit{lim}}a_{n}(f_{m(k)}) & = & \underset{k\rightarrow\infty}{\mathit{lim}}\left\{ -\int_{\emph{0}}^{\text{\emph{1}}}\sum_{j=1}^{n-1}2jt^{\text{\emph{n-j-1}}}g_{j}(t)s_{m(k)}(t)^{\text{\emph{n-j}}}dt\right\} \\
 & = & -\int_{\emph{0}}^{\text{\emph{1}}}\sum_{j=1}^{n-1}2jt^{\text{\emph{n-j-1}}}g_{j}(t)\tilde{\mu}(t)^{\text{\emph{n-j}}}dt\\
 & = & a_{n}(\tilde{f})\end{eqnarray*}
for every $n\in\mathbb{N}$, $n\geq2$ and $\tilde{\mu}$ is a generating
function of $\tilde{f}\in$ S. Since S is a normal family the subsequences
$(s_{\text{\emph{m(k)}}})$ and $(f_{\text{\emph{m(k)}}})$ can be
chosen so that $(f_{\text{\emph{m(k)}}})$ converges locally uniformly
on $\mathbb{D}$ to $\tilde{f}\in$ S as $k\rightarrow\infty$. This
proves the Theorem. 
\end{proof}
Theorems similar to Theorem 1 were given by Tammi(\cite{3}, chapters
3.3 and 3.4). Tammi considered bounded schlicht functions, the approach
and proof given here is somewhat different.\\
For Theorem 1 to be useful explicit formulae of the coefficients
of schlicht functions that are generated by step functions are needed.
Lemma 1 provides these representations for the second, third and fourth
coefficients.
\begin{lem}
Let $n\in\mathbb{N}$ be arbitrary, with the notation of $T_{n}([0,1])$
as above let $s\in T_{n}([0,1])$, $s:[0,1]\mathbb{\rightarrow C}$
be defined by $s(x)=c_{\text{\emph{k}}}$ if $x\in I_{k}$ where $c_{\text{\emph{k}}}\in\mathbb{C}$,
for $k=1,..,n$ and let $c_{\text{\emph{0}}}\equiv0$. Suppose that
$s$ is a generating function of the schlicht function $f(z)=z+\sum_{k=2}^{\infty}a_{k}z^{k}$
. Then\begin{equation}
a_{2}=-\frac{2}{n}\sum_{k=1}^{n}c_{k}\end{equation}
\begin{equation}
a_{3}=a_{2}^{2}-\sum_{k=1}^{n}\frac{2k-1}{n^{2}}c_{k}^{2}\end{equation}
\begin{equation}
a_{4}=3a_{2}a_{3}-2a_{2}^{3}-\frac{2}{n^{3}}\sum_{k=1}^{n}c_{k}^{2}\{k^{\text{\emph{2}}}c_{k}+(2k-1)\sum_{j=0}^{k-1}c_{j}\}.\end{equation}
\end{lem}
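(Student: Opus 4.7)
My plan is to apply the integrated Löwner recursion (7) directly to the step function $\mu = s$, computing the auxiliary functions $g_2$, $g_3$ in closed piecewise form and then evaluating $g_4(1)$. The natural bookkeeping device is the sequence of partial sums $S_k := \sum_{j=0}^{k} c_j$ (so $S_0 = 0$), together with the decomposition of $[0,1]$ into the $I_k = [(k-1)/n, k/n)$ on which $s$ is constantly $c_k$.

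For (18) and (19) the work is light. The case $m = 2$ of (7) gives $g_2(x) = -2\int_0^x s(t)\,dt$, so for $t \in I_k$ one has
$$g_2(t) = -2\Bigl[\tfrac{S_{k-1}}{n} + \bigl(t - \tfrac{k-1}{n}\bigr) c_k\Bigr],$$
and at $x=1$ this gives $a_2 = -(2/n) S_n$, which is (18). For (19) the case $m = 3$ reads $g_3(1) = -\int_0^1 [2t\,s(t)^2 + 4 g_2(t)\,s(t)]\,dt$. Splitting over the $I_k$, the first integral contributes $-\sum_k (2k-1)c_k^2 / n^2$ via $\int_{(k-1)/n}^{k/n} t\,dt = (2k-1)/(2n^2)$, and the second, after substituting the piecewise $g_2$, gives $(8/n^2) \sum_k c_k S_{k-1} + (4/n^2)\sum_k c_k^2$. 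The algebraic identity $S_n^2 = \sum_k c_k^2 + 2\sum_k c_k S_{k-1}$ then identifies this second contribution as $a_2^2$, yielding (19).

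For (20) the same procedure is used but the bookkeeping is heavier. First I would derive a piecewise closed form for $g_3(t)$ on each $I_k$ by integrating the $m = 3$ recursion over the previous intervals and the partial segment of $I_k$; the result is a polynomial in $t$ of degree at most two whose coefficients involve $c_k^2$, $c_k S_{k-1}$ and one-dimensional sums of these quantities over previous indices. Then (7) with $m = 4$ gives
$$g_4(1) = -\int_0^1 \bigl[2t^2 s(t)^3 + 4t\, g_2(t)\, s(t)^2 + 6\, g_3(t)\, s(t)\bigr]\,dt,$$
and each integrand is evaluated interval by interval. Using (18) and (19) to expand
$$3 a_2 a_3 - 2 a_2^3 = a_2^3 - \tfrac{3 a_2}{n^2} \sum_k (2k-1) c_k^2,$$
one then matches the sums term by term against those produced by the three integrals above, grouping by monomial type $c_k^3$, $c_k^2 S_{k-1}$, $c_k S_{k-1}^2$, etc. After cancellation the only residual piece is exactly $-(2/n^3) \sum_k c_k^2 \bigl[k^2 c_k + (2k-1) S_{k-1}\bigr]$, which proves (20).

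The main obstacle is the bookkeeping in the third step: the antiderivatives on each $I_k$ produce weights $(k^3 - (k-1)^3)/(3n^3)$ and $(k^2 - (k-1)^2)/(2n^2)$ that must be combined cleanly with the partial sums $S_{k-1}$ coming from the earlier intervals, and the cross terms from $6 g_3(t) s(t)$ involve double sums that need to be reindexed to match the single sum on the right-hand side of (20). Once everything is collected by monomial type in the formal variables $c_1, \ldots, c_n$, the assertion reduces to a polynomial identity verifiable coefficient by coefficient, and a sanity check at $n = 1$ (where $s \equiv c_1$ gives a rotated Koebe function with $a_2 = -2c_1$, $a_3 = 3c_1^2$, $a_4 = -4c_1^3$) confirms the signs and normalizations.
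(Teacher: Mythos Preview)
Your proposal is correct and will reach (18)--(20), but it takes a different route from the paper for the two higher coefficients. You compute $g_3(1)$ and $g_4(1)$ by brute-force piecewise integration---finding explicit polynomial formulas for $g_2$ and $g_3$ on each $I_k$, integrating term by term, and then recognizing the combinations $a_2^2$ and $3a_2a_3-2a_2^3$ \emph{a posteriori} via summation identities such as $S_n^2=\sum_k c_k^2+2\sum_k c_kS_{k-1}$. The paper instead substitutes $s(t)=-\tfrac12 g_2'(t)$ into (7) so that the integrands are expressed entirely through $g_2$, $g_2'$, and $g_3$, and then uses integration by parts together with the differential relation $g_3'=2g_2g_2'-\tfrac12 t\,g_2'^2$ to extract the boundary terms $g_2(1)^2$ (for $a_3$) and $3g_3(1)g_2(1)-2g_2(1)^3$ (for $a_4$) \emph{before} any interval-by-interval work. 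This leaves only the small remainder integrals $-\int_0^1\tfrac12 t\,g_2'^2\,dt$ and $\int_0^1\bigl(\tfrac14 t^2 g_2'^3+\tfrac12 t\,g_2 g_2'^2\bigr)\,dt$ to evaluate piecewise, each of which involves only $g_2$ and not $g_3$. What the paper's approach buys is that you never need the piecewise formula for $g_3$, and the double sums that you anticipate having to reindex simply do not appear: the cross terms are absorbed into the integrated parts. Your approach, on the other hand, is entirely self-contained and requires no clever substitution---just careful bookkeeping, as you note.
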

\begin{proof}
For $x\in(0,1)$ the closed form representation $s(x)=c_{[nx]+1}$
holds and $s(1)=c_{n}$. Then (7) yields\begin{equation}
g_{2}(x)=-\int_{0}^{x}2s(t)dt=-\frac{2}{n}\sum_{k=0}^{[nx]}c_{k}-2(x-\frac{[nx]}{n})c_{[nx]+1}\end{equation}
Taking the limit $\underset{x\rightarrow1-}{\mathit{lim}}\, g_{2}(x)$
in (21) proves (18). To prove (19) substitute $g_{2}'(t)=-2s(t)$
(equation (6) with $n=2$) in (7). This yields\[
g_{3}(1)=\int_{0}^{1}2g_{2}(t)g_{2}'(t)-\frac{1}{2}tg_{2}'(t)^{2}dt=g_{2}(1)^{2}-\sum_{k=1}^{n}\int_{(k-1)/n}^{k/n}2tc_{k}^{2}dt=g_{2}(1)^{2}-\sum_{k=1}^{n}c_{k}^{2}\frac{2k-1}{n^{2}}\]
This proves (19). To prove (20) substitute $g_{2}'(t)=-2s(t)$ in
(7) for $n=4$. Then\begin{eqnarray}
g_{4}(1) & = & \int_{0}^{1}\frac{1}{4}t^{2}g_{2}'(t)^{3}-tg_{2}(t)g_{2}'(t)^{2}+3g_{3}(t)g_{2}'(t)dt\end{eqnarray}
Apply integration by parts to (22) and substitute $g_{3}'(t)=2g_{2}(t)g_{2}'(t)-\frac{1}{2}tg_{2}'(t)^{2}$
to obtain \begin{eqnarray}
g_{4}(1) & = & 3g_{3}(1)g_{2}(1)-2g_{2}(1)^{3}+\int_{0}^{1}\frac{1}{4}t^{2}g_{2}'(t)^{3}+\frac{1}{2}tg_{2}(t)g_{2}'(t)^{2}dt\end{eqnarray}
Now use (21) to evaluate the integral expression in (23) on the subinterval
$((k-1)/n,k/n)$ where $1\leq k\leq n$ \begin{eqnarray*}
\int_{(k-1)/n}^{k/n}\frac{1}{4}t^{2}g_{2}'(t)^{3}+\frac{1}{2}tg_{2}(t)g_{2}'(t)^{2}dt & = & -\int_{(k-1)/n}^{k/n}2t^{2}c_{k}^{3}+2tc_{k}^{2}(\frac{2}{n}\sum_{j=0}^{k-1}c_{j}+2(t-\frac{k-1}{n})c_{k})dt\end{eqnarray*}
\begin{eqnarray*}
 & = & -\int_{(k-1)/n}^{k/n}6t^{2}c_{k}^{3}+4tc_{k}^{2}(\frac{1}{n}\sum_{j=0}^{k-1}c_{j}-c_{k}\frac{k-1}{n})dt\\
 & = & -2(\frac{k^{3}-(k-1)^{3}}{n^{3}})c_{k}^{3}-\int_{(k-1)/n}^{k/n}4tc_{k}^{2}(\frac{1}{n}\sum_{j=0}^{k-1}c_{j}-\frac{k-1}{n}c_{k})dt\\
 & = & -2(\frac{3k^{2}-3k+1}{n^{3}})c_{k}^{3}-2\frac{k^{2}-(k-1)^{2}}{n^{3}}(c_{k}^{2}\sum_{j=0}^{k-1}c_{j}-(k-1)c_{k}^{3})\\
 & = & -2(\frac{3k^{2}-3k+1}{n^{3}})c_{k}^{3}+2\frac{(2k-1)(k-1)}{n^{3}}c_{k}^{3}-2\frac{2k-1}{n^{3}}c_{k}^{2}\sum_{j=0}^{k-1}c_{j}\\
 & = & -2c_{k}^{2}(\frac{k^{2}}{n^{3}}c_{k}+\frac{2k-1}{n^{3}}\sum_{j=0}^{k-1}c_{j}).\end{eqnarray*}
Summing these expressions and substituting into (23) yields (20). 
\end{proof}
Lemma 1 allows to express the real and imaginary parts of the second,
third and fourth coefficients in (18)-(20) as trigonometric polynomials,
since the $c_{k}$ have representations $c_{k}=\mathit{cos}\varphi_{k}+i\mathit{sin}\varphi_{k}$
for $k=1,..,m$. As a consequence the real part $\mathit{\mathit{Re}}\Phi:\mathbb{V}_{n}^{m}\rightarrow\mathbb{R}$
of the continuous functional $\Phi$ restricted to $\mathbb{V}_{n}^{m}$
can be expressed as a continuous function $\Theta_{m}:[0,2\pi]^{m}\rightarrow\mathbb{R}$
$\Theta_{m}(\varphi_{1},..,\varphi_{m})=\mathit{\mathit{Re}}\Phi(a_{2},..,a_{n})$.
Therefore the Heine-Borel Theorem ensures that $\Theta_{m}$ attains
its maximum on $[0,2\pi]^{m}$. It follows that condition (8) in Theorem
1 holds for every continuous coefficient functional of the second,
third and fourth coefficients.\\
Maximizing sequences were calculated, the partition of the interval
$[0,1]$ was always chosen as an equipartition with a varying number
of subintervals $m$. A method of successive refinements was used
to calculate the maxima, the computer program used for the calculations
is available from the author. Comparision with well-known values of
functionals(Fekete-Szegö theorem, third column, Table 1) indicates
that by the method used here the functionals can be calculated with
a precision of approximately $0.000005$. Table 1 shows the results
for some functionals.\\
\emph{\noun{Table 1:}} Values of some functionals depending on
the number $m$ of subintervals

\begin{tabular}{|c|c|c|c|c|}
\hline 
 & $|\gamma_{1}|^{2}+2|\gamma_{2}|^{2}-\frac{3}{2}$ & $|\gamma_{1}|^{2}+2|\gamma_{2}|^{2}+3|\gamma_{3}|^{2}-\frac{11}{6}$ & $\frac{1}{2}|a_{3}-\frac{1}{4}a_{2}^{2}|$ & $|\frac{1}{2}a_{4}-\frac{1}{4}a_{3}a_{2}+\frac{1}{16}a_{2}^{3}|$\tabularnewline
\hline
\hline 
$m=50$ & $0.034815$ & $0.029473$ & $1.013393$ & $1.006727$\tabularnewline
\hline 
$m=100$ & $0.034845$ & $0.029566$ & $1.013412$ & $1.006755$\tabularnewline
\hline 
$m=200$ & $0.034853$ & $0.029591$ & $1.013414$ & $1.006762$\tabularnewline
\hline 
$m=400$ & $0.034854$ & $0.029596$ & $1.013415$ & $1.006763$\tabularnewline
\hline
\end{tabular}

Here the $\gamma_{k}$, $k=1,2,3$ denote the logarithmic coefficients
of schlicht functions(see \cite{1}, chapter 5 for a definition of
the logarithmic coefficients). The first two functionals of Table
1 are associated with the Milin-constant(see \cite{1}, chapter 5
for a definition of these functionals and their relation to the Milin-constant),
the third functional of Table 1 is the modulus of the fifth coefficient
of odd schlicht functions and the fourth functional is the modulus
of the seventh coefficient of odd schlicht functions. In particular
the calculations reveal that Leemans result(\cite{4}) that $1090/1083\thickapprox1.006463$
is the maximum of the modulus of the seventh coefficient of odd schlicht
functions which are typically real does not extend to the whole class
of schlicht functions. A numerical disproof within the limits of machine
precision and rounding errors can already be given for $m=20.$ If
the $c_{k}$ for $k=1,..,m$ from Lemma 1 are given by $c_{k}=\mathit{cos}\varphi_{k}+i\,\mathit{sin}\varphi_{k}$
then the generating step function given in Table 2(in terms of $\varphi_{k}$)
provides a counter example.\\
\noun{Table 2:} Angles in Radians(m=20) 

\begin{tabular}{|c|c|c|c|c|c|c|c|c|c|c|}
\hline 
$k$ & $1$ & $2$ & $3$ & $4$ & $5$ & $6$ & $7$ & $8$ & $9$ & $10$\tabularnewline
\hline
\hline 
$\varphi_{k}$ & $3,180$ & $3,185$ & $3,189$ & $3,195$ & $3,203$ & $3,212$ & $3,224$ & $3,241$ & $3,268$ & $3,315$\tabularnewline
\hline
\end{tabular}

\begin{tabular}{|c|c|c|c|c|c|c|c|c|c|c|}
\hline 
$k$ & $11$ & $12$ & $13$ & $14$ & $15$ & $16$ & $17$ & $18$ & $19$ & $20$\tabularnewline
\hline
\hline 
$\varphi_{k}$ & $3,536$ & $3,835$ & $2,770$ & $2,686$ & $2,600$ & $2,306$ & $3,000$ & $3,039$ & $3,062$ & $3,078$\tabularnewline
\hline
\end{tabular}

The value of the functional $|\frac{1}{2}a_{4}-\frac{1}{4}a_{3}a_{2}+\frac{1}{16}a_{2}^{3}|$
can be easily calculated by substituting the values of the $c_{k}$,
$k=1,..,20$ into (18)-(20), it is approximately $1,006491>1090/1083\thickapprox1.006463$.\\
\\
Numerical evidence indicates that the maximum of the functional
in the first column of Table 1 might be attained by a typically real
function. The next Theorem supports this assumption and provides the
best lower bound for the Milin-constant currently known.
\begin{thm}
Let T denote the class of typically real functions and S the class
of schlicht functions. Then \[
\underset{f\in S\cap T}{\mathit{max}}\{|\gamma_{1}|^{2}-1+2|\gamma_{2}|^{2}-\frac{1}{2}\}=2\lambda_{0}^{4}e^{-4\lambda_{0}}+(3\lambda_{0}^{2}+2\lambda_{0}+1)e^{-2\lambda_{0}}-1\thickapprox0.03485611\]
where $\lambda_{0}$ is the zero of the equation $0=4e^{-2\lambda}(\lambda^{2}-\lambda^{3})-3\lambda+1$
in $[0,\infty)$, i.e. $\lambda_{0}=0.39004568.$...\end{thm}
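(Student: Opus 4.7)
The plan is to reduce the optimization over $S\cap T$ to a one-parameter variational problem via a generalized (measure-valued) Loewner representation. First I would use that every $f\in S\cap T$ has real coefficients, hence its Loewner chain $f(z,t)$ has real coefficients at every time, which forces the driving function to be the Herglotz integral of a probability measure $\nu_{t}$ on $\partial\mathbb{D}$ that is symmetric under complex conjugation; if $m_{k}(t)=\int\kappa^{k}\,d\nu_{t}(\kappa)$, this symmetry gives $m_{k}(t)\in\mathbb{R}$, and the pair $(m_{1}(t),m_{2}(t))$ is pointwise confined to the Hausdorff moment region $\{(x,y):|x|\le 1,\;2x^{2}-1\le y\le 1\}$. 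Integrating the coefficient ODE (3) of the paper backwards from $t=\infty$ exactly as in (4), but with $\kappa^{k}$ replaced by $m_{k}$, and collapsing the resulting double integral by Fubini, will yield
\begin{equation*}
a_{2}=-2\int_{0}^{\infty}e^{-s}m_{1}(s)\,ds,\qquad a_{3}-a_{2}^{2}=-2\int_{0}^{\infty}e^{-2s}m_{2}(s)\,ds.
\end{equation*}

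Setting $A=a_{2}/2$ and $B=(a_{3}-a_{2}^{2})/2$, so that $\gamma_{1}=A$ and $\gamma_{2}=A^{2}+B$, the objective becomes $F=A^{2}+2(A^{2}+B)^{2}-3/2$. As this is convex-quadratic in $B$ (with minimum at $B=-A^{2}$), its maximum over the feasible $B$-range with $m_{1}$ fixed is attained at an endpoint, namely either $m_{2}\equiv 1$ or $m_{2}(t)=2m_{1}(t)^{2}-1$. A sign check on $A^{2}+B$ at the candidate extremum selects the latter (so $\nu_{t}$ is a two-atom symmetric measure $\frac{1}{2}(\delta_{\kappa}+\delta_{\bar\kappa})$), reducing the problem to a scalar control $m_{1}(t)\in[-1,1]$. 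The Euler--Lagrange equation for $m_{1}(t)$ in the interior then forces $m_{1}(t)=\beta e^{t}$; since this grows exponentially while $|m_{1}|\le 1$, I anticipate a bang-bang extremal $m_{1}(t)=e^{t-\lambda}$ on $[0,\lambda]$ and $m_{1}(t)\equiv 1$ on $[\lambda,\infty)$, where $\lambda$ is the saturation time.

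Substituting this ansatz gives $I_{1}:=\int_{0}^{\infty}m_{1}e^{-s}\,ds=(\lambda+1)e^{-\lambda}$ and $I_{2}:=\int_{0}^{\infty}m_{1}^{2}e^{-2s}\,ds=(\lambda+1/2)e^{-2\lambda}$, so the bracket $I_{1}^{2}+1/2-2I_{2}$ collapses to $\lambda^{2}e^{-2\lambda}+1/2$, and a routine expansion yields
\begin{equation*}
F(\lambda)=I_{1}^{2}+2(\lambda^{2}e^{-2\lambda}+1/2)^{2}-3/2=2\lambda^{4}e^{-4\lambda}+(3\lambda^{2}+2\lambda+1)e^{-2\lambda}-1.
\end{equation*}
The matching condition at the saturation time ($\beta e^{\lambda}=1$ combined with $\beta=I_{1}(1+4\gamma)/(8\gamma)$, $\gamma=I_{1}^{2}+1/2-2I_{2}$) simplifies after elementary algebra to $4e^{-2\lambda}(\lambda^{2}-\lambda^{3})-3\lambda+1=0$, whose unique positive root is $\lambda_{0}$; plugging back gives the stated maximum value. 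Global optimality among the Pontryagin candidates ($m_{1}\equiv\pm 1$, reproducing the Koebe functions with $F=0$, and $m_{2}\equiv 1$, which elementary calculation shows gives $F\le 0$) then follows by direct comparison. The main obstacle will be justifying the generalized Loewner representation of $S\cap T$ in the first place, since the paper's single-slit framework with real $c_{k}\in\{-1,1\}$ only produces the curve $a_{3}=a_{2}^{2}-1$ and misses most typically real $f\in S$ (for example $f(z)=z/(1-z^{2})\in S\cap T$ has $a_{3}=1\ne a_{2}^{2}-1=-1$); Theorem 1 must therefore be extended either by passing directly to conjugation-symmetric measure-valued generators, or by taking weak-$*$ limits of step functions whose $c_{k}$ come in balanced complex conjugate pairs.
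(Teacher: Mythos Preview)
Your computation lands on exactly the same one-parameter family $F(\lambda)$ and the same critical equation as the paper, but the route is different and in one respect based on a misreading of the setup.

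The paper never restricts the driving term to real values $c_{k}\in\{-1,1\}$; it works with the ordinary single-slit L\"owner theory with a complex unimodular $\kappa(t)=e^{i\vartheta(t)}$ and only imposes $\operatorname{Im}a_{2}=\operatorname{Im}a_{3}=0$ \emph{a posteriori}, taking real parts. Because $|\kappa|=1$, the identity $\operatorname{Re}\kappa^{2}=2(\operatorname{Re}\kappa)^{2}-1$ holds automatically, which in your notation is exactly the relation $m_{2}=2m_{1}^{2}-1$. So what you reach after your moment analysis and your ``endpoint'' selection for $m_{2}$ is simply the starting point of the paper's argument with $m_{1}=\cos\vartheta$; the measure-valued generalization you propose (and the obstacle you flag at the end, with the $z/(1-z^{2})$ example) is unnecessary. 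Every typically real schlicht function is a limit of single-slit maps with complex $\kappa$, and the functional depends only on $(a_{2},a_{3})$, so taking real parts suffices.

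From that common point the two arguments diverge. Your Euler--Lagrange computation, yielding the interior arc $m_{1}(t)=e^{t-\lambda}$ saturating to $1$ at $t=\lambda$, together with the switching condition, is precisely a rederivation of the Valiron--Landau Lemma: given $\int_{0}^{\infty}e^{-2t}\cos^{2}\vartheta\,dt=(\lambda+\tfrac{1}{2})e^{-2\lambda}$, one has $\bigl|\int_{0}^{\infty}e^{-t}\cos\vartheta\,dt\bigr|\le(\lambda+1)e^{-\lambda}$, with equality for that very profile. The paper simply quotes this lemma from Duren, substitutes, and differentiates $F(\lambda)$. For the existence of an extremal in $S\cap T$ the paper invokes the Fekete--Szeg\H{o}/Duren construction (Theorem~3.22) together with Goodman's theorem that real initial coefficients can be realized by a typically real schlicht function, whereas you get existence implicitly from the explicit bang-bang control. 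Your approach is self-contained but considerably longer; the paper's is shorter because it outsources the hard inequality to a classical lemma.
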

\begin{proof}
The first and second logarithmic coefficients $\gamma_{1}$ and $\gamma_{2}$
by (3) satisfy the equations $\dot{\gamma}_{1}(t)=\gamma_{1}(t)+\kappa(t)$
and $\dot{\gamma}_{2}(t)=2\gamma_{2}(t)+2\gamma_{1}(t)\kappa(t)+\kappa(t)^{2}$.
Then\[
\gamma_{1}(t)e^{-t}=-\int_{t}^{\infty}\kappa(s)e^{-s}ds\]
and the second equation is equivalent to the equation\[
\frac{d}{dt}(\gamma_{2}(t)e^{-2t})=-2(\gamma_{1}(t)e^{-t})\frac{d}{dt}(\gamma_{1}(t)e^{-t})+e^{-2t}\kappa(t)^{2}.\]
Let $\gamma_{1}(0)=\gamma_{1}$ and $\gamma_{2}(0)=\gamma_{2}$ then
integrating the last equation yields\[
\gamma_{2}=\gamma_{1}^{2}-\int_{0}^{\infty}e^{-2t}\kappa(t)^{2}dt\]
Suppose that $\mathit{Im}a_{2}=0$ and $\mathit{Im}a_{3}=0,$ then
also $\mathit{Im}\gamma_{2}=0$ and $\mathit{Im}\gamma_{1}=0$ and
\[
2|\gamma_{2}|{}^{2}+|\gamma_{1}|^{2}=2\{\gamma_{1}^{2}-Re\int_{0}^{\infty}e^{-2t}\kappa(t)^{2}dt\}^{2}+\gamma_{1}^{2}.\]
Let $Re\kappa(t)=\mathit{cos}\vartheta(t)$ then \[
2|\gamma_{2}|^{2}+|\gamma_{1}|^{2}=2\{\gamma_{1}^{2}-\int_{0}^{\infty}e^{-2t}\mathit{cos}2\vartheta(t)dt\}{}^{2}+\gamma_{1}^{2}\]
\[
=2\{\gamma_{1}^{2}+\frac{1}{2}-2\int_{0}^{\infty}(e^{-t}\mathit{cos}\vartheta(t))^{2}dt\}^{2}+\gamma_{1}^{2}.\]
If $\int_{0}^{\infty}(e^{-t}\mathit{cos}\vartheta(t))^{2}dt=(\lambda+\frac{1}{2})e^{-2\lambda}$
then the Valiron-Landau Lemma(\cite{1},chapter 3) yields $\gamma_{1}^{2}\leq(\lambda+1)^{2}e^{-2\lambda}$.
Hence\begin{equation}
2|\gamma_{2}|{}^{2}+|\gamma_{1}|^{2}\leq2(\lambda^{2}e^{-2\lambda}+\frac{1}{2})^{2}+(\lambda+1)^{2}e^{-2\lambda}=F(\lambda).\end{equation}
A necessary condition that $F(\lambda)$ attains its maximum is $0=4e^{-2\lambda}(\lambda^{2}-\lambda^{3})-3\lambda+1$.
The solution of the equation is $\lambda_{0}\thickapprox0.39004568$.
The existence of a schlicht function for which equality holds in (24)
is proved almost identical as in the proof of Theorem 3.22 in \cite{1}.
Since the second and the third coefficients were chosen to be real
Goodman's Theorem(\cite{5}) ensures the existence of a typically
real function for which equality holds in (24). 
\end{proof}
Furthermore, since numerical calculation of the Milin-Functional $\delta_{3}:=|\gamma_{1}|^{2}-1+2|\gamma_{2}|^{2}-\frac{1}{2}+3|\gamma_{3}|^{3}-\frac{1}{3}$
for $n=3$ shows that $\delta_{3}<0.03$ (see Table 1)the value given
in Theorem 2 might well be the Milin-constant.\\

\end{document}